\definecolor{chianti}{rgb}{0.6,0,0}
\definecolor{meretale}{rgb}{0,0,.6}
\definecolor{leaf}{rgb}{0,.35,0}
\newtheorem{theorem}{Theorem}[section]
\newtheorem{lemma}[theorem]{Lemma}
\newtheorem{proposition}[theorem]{Proposition}
\theoremstyle{definition}
\newtheorem{example}[theorem]{Example}
\newtheorem{remark}[theorem]{Remark}
\numberwithin{equation}{theorem}
\def\ge{\geqslant}
\def\le{\leqslant}
\def\to{\longrightarrow}
\def\mapsto{\longmapsto}
\def\tilde{\widetilde}
\def\GL{\operatorname{GL}}
\def\SL{\operatorname{SL}}
\def\Frac{\operatorname{frac}}
\def\Reflex{\operatorname{Ref}}
\def\rank{\operatorname{rank}}
\def\Hom{\operatorname{Hom}}
\def\grmod{\operatorname{grmod}}
\def\Qgrmod{\QQ\grmod}
\def\Sym{\operatorname{Sym}}
\def\LD{\operatorname{LD}}
\def\LRep{\operatorname{LRep}}
\def\LInd{\operatorname{LInd}}
\renewcommand{\mod}{\operatorname{mod}}
\def\fraka{\mathfrak{a}}
\def\frakm{\mathfrak{m}}
\def\frakS{\mathfrak{S}}
\def\FF{\mathbb{F}}
\def\FFp{\mathbb{F}_{\!p}}
\def\NN{\mathbb{N}}
\def\QQ{\mathbb{Q}}
\def\ZZ{\mathbb{Z}}
\def\calS{\mathcal{S}}
\def\id{\mathrm{id}}
\def\tr{\mathrm{tr}}
\begin{document}
\title[Frobenius representation type for invariant rings of finite groups]{Frobenius representation type for invariant rings\\
of finite groups}

\author{Mitsuyasu Hashimoto}
\address{Department of Mathematics, Osaka Metropolitan University, Sumiyoshi-ku, Osaka 558--8585, JAPAN}
\email{mh7@omu.ac.jp}

\author{Anurag K. Singh}
\address{Department of Mathematics, University of Utah, 155 South 1400 East, Salt Lake City, UT~84112, USA}
\email{singh@math.utah.edu}

\thanks{M.H. was partially supported by JSPS KAKENHI Grant number 20K03538 and MEXT Promotion of Distinctive Joint Research Center Program JPMXP0723833165; A.K.S.~was supported by NSF grants DMS~2101671 and DMS~2349623.}

\subjclass[2010]{Primary 13A50; Secondary 13A35.}
\keywords{ring of invariants, finite Frobenius representation type, $F$-pure rings}

\dedicatory{Dedicated to Professor Kei-ichi Watanabe, in celebration of his 80th birthday}

\begin{abstract}
Let $V$ be a finite rank vector space over a perfect field of characteristic $p>0$, and let $G$ be a finite subgroup of $\operatorname{GL}(V)$. If $V$ is a permutation representation of $G$, or more generally a monomial representation, we prove that the ring of invariants $(\operatorname{Sym}V)^G$ has finite Frobenius representation type. We also construct an example with $V$ a finite rank vector space over the algebraic closure of the function field ${\mathbb{F}_3}(t)$, and $G$ an elementary abelian subgroup of $\operatorname{GL}(V)$, such that the invariant ring $(\operatorname{Sym}V)^G$ does not have finite Frobenius representation type.
\end{abstract}
\maketitle

\section{Introduction}
\label{section:introduction}

The study of rings of finite Frobenius representation type was initiated by Smith and Van den Bergh~\cite{Smith:VdB}, as part of an attack on the conjectured simplicity of rings of differential operators on invariant rings; indeed, using this notion, they proved that if $R$ is a graded direct summand of a polynomial ring over a perfect field $k$ of positive characteristic, e.g., if $R$ is the ring of invariants for a linearly reductive group acting linearly on the polynomial ring, then the ring of $k$-linear differential operators on $R$ is a simple ring~\cite[Theorem~1.3]{Smith:VdB}. 

A reduced ring $R$ of prime characteristic $p>0$, satisfying the Krull-Schmidt theorem, has \emph{finite Frobenius representation type} (FFRT) if there exists a finite set~$\calS$ of~$R$-modules such that for each integer $e\ge0$, each indecomposable $R$-module summand of~$R^{1/p^e}$ is isomorphic to an element of $\calS$; the FFRT property and its variations are reviewed in~\S\ref{section:preliminaries}. Examples of rings with FFRT include Cohen-Macaulay rings of finite representation type, graded direct summands of polynomial rings~\cite[Proposition~3.1.6]{Smith:VdB}, and Stanley-Reisner rings~\cite[Example~2.3.6]{Kamoi}. More recently, Raedschelders, \v Spenko, and Van den Bergh proved that over an algebraically closed field of characteristic $p\ge\max\{n-2,3\}$, the Pl\"ucker homogeneous coordinate ring of the Grassmannian~$G(2,n)$ has FFRT~\cite{RSV}. In another direction, work of Hara and Ohkawa~\cite{Hara:Ohkawa} investigates the FFRT property for two-dimensional normal graded rings in terms of $\QQ$-divisors.

In addition to the original motivation, the FFRT property has found several applications. Suppose a ring $R$ has FFRT. Then Hilbert-Kunz multiplicities over $R$ are rational numbers by~\cite{Seibert}; tight closure and localization commute in $R$,~\cite{Yao}; local cohomology modules of the form~$H^k_\fraka(R)$ have finitely many associated primes,~\cite{Takagi:Takahashi, Hochster:Nunez, Dao:Quy}. For more on the~FFRT property, we point the reader towards \cite{Alhazmy:Katzman, Kamoi, Mallory:ffrt, Shibuta1, Shibuta2, Singh:Watanabe}.

Our goal here is to investigate the FFRT property for rings of invariants of finite groups. Let $V$ be a finite rank vector space over a perfect field $k$ of characteristic~$p>0$, and let $G$ be a finite subgroup of $\GL(V)$. In the nonmodular case, that is, when the order of~$G$ is not divisible by $p$, the invariant ring $S^G$ is a direct summand of the polynomial ring~$S\colonequals\Sym V$ via the Reynolds operator; it follows by \cite[Proposition~3.1.4]{Smith:VdB} that~$S^G$ has FFRT. The question becomes more interesting in the modular case, i.e., when $p$ divides $|G|$. We prove that if $V$ is a monomial representation of $G$, then the ring of invariants $S^G$ has~FFRT, Theorem~\ref{theorem:monomial}; this includes the case of a subgroup $G$ of the symmetric group~$\frakS_n$, acting on a polynomial ring $S\colonequals k[x_1,\dots,x_n]$ by permuting the indeterminates. On the other hand, while it had been expected that rings of invariants of reductive groups have FFRT (see for example the abstract of~\cite{RSV}), we prove that this is not the case:

\begin{theorem}
\label{theorem:not:FFRT}
Set $k$ to be the algebraic closure of the function field $\FF_3(t)$. Then there is an order $9$ subgroup $G$ of $\GL_3(k)$, such that $k[x_1,x_2,x_3]^G$ does not have FFRT.
\end{theorem}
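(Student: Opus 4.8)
The plan is to exhibit an explicit modular representation whose invariant ring fails FFRT, and to detect the failure via a quantitative invariant that distinguishes infinitely many indecomposable summands of the Frobenius pushforwards. Since $k$ is the algebraic closure of $\FF_3(t)$, it is perfect, so the FFRT framework applies; the point of working over this non-algebraically-closed-over-$\FF_3$ ground field is precisely that the parameter $t$ can be built into the group action. Concretely, I would take $G$ to be generated by two commuting order-$3$ unipotent (or near-unipotent) elements of $\GL_3(k)$, one of which has matrix entries involving $t$, so that $G\cong(\ZZ/3)^2$ and $p=3$ divides $|G|=9$; this puts us squarely in the modular case where the Reynolds-operator argument is unavailable. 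The first step is to write down this $G$ explicitly and identify the invariant ring $A\colonequals k[x_1,x_2,x_3]^G$ — likely a three-dimensional normal (Cohen-Macaulay, by the Hochster-Eagon type results or by direct computation) non-Gorenstein ring, or a hypersurface-like ring, depending on the precise choice — together with generators and relations.

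The heart of the argument is to produce, for infinitely many $e$, an indecomposable summand $M_e$ of $A^{1/p^e}$ such that the $M_e$ are pairwise non-isomorphic, contradicting FFRT. The strategy I would pursue is to attach to each $M_e$ a numerical or module-theoretic invariant that grows (or otherwise varies) without bound in $e$. Natural candidates: (i) the minimal number of generators, or the rank of a specified Ext or Tor module, or (ii) ramification/conductor data of a rank-one summand viewed as a divisorial ideal, detecting that the $t$-dependence forces summands supported along divisors with unboundedly complicated behaviour as one extracts higher $p^e$-th roots of $t$. Because $t^{1/p^e}$ generates a degree-$p^e$ purely inseparable extension, the module $A^{1/p^e}$ genuinely "sees" deeper and deeper roots of $t$, and I would leverage this: the local structure of $A^{1/p^e}$ at the relevant prime should involve $k(t^{1/p^e})$-data that cannot be accounted for by any fixed finite list $\calS$ of $A$-modules. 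Making this precise is where I expect to concentrate the technical work — one wants a clean invariant $\iota(M)$ defined on finitely generated $A$-modules, constant on isomorphism classes, taking only finitely many values on any finite set, yet provably unbounded along a subsequence of the $M_e$.

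The main obstacle, I expect, is controlling the indecomposable \emph{direct summands} of $A^{1/p^e}$ rather than $A^{1/p^e}$ itself: a priori the "complexity" could be diluted by splitting into many pieces each of bounded complexity. To handle this I would either (a) first prove that $A^{1/p^e}$, or a well-chosen summand of it such as a symbolic power of a height-one prime, is itself indecomposable over the completion (e.g.\ by showing its endomorphism ring is local, perhaps because $A$ is a domain and the module is birationally a rank-one reflexive module with no idempotents), so that the growing invariant is carried by a single indecomposable; or (b) use a pigeonhole/counting argument showing that if all summands came from a fixed finite set $\calS$, then the total invariant of $A^{1/p^e}$ would be bounded by (max over $\calS$) times the number of summands, and then separately bound the number of indecomposable summands of $A^{1/p^e}$ by something like $\operatorname{rank}_A A^{1/p^e}=p^{e\cdot\dim A}$ times a constant, deriving a contradiction if the chosen invariant grows faster than this permits — which it will not, so (b) needs the invariant to be \emph{multiplicative-free}, e.g.\ "appears as a summand at all" for a specific module $N_e$ that changes with $e$. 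In practice I would aim for version (a): identify a specific height-one prime $\frakp$ of $A$ whose symbolic Rees algebra encodes the tower $\FF_3(t)\subset\FF_3(t^{1/3})\subset\cdots$, show $\frakp^{(n)}$ for suitable $n=n(e)$ occurs as a summand of $A^{1/p^e}$, show these reflexive ideals are pairwise non-isomorphic in the divisor class group (again using the purely inseparable tower to separate classes), and conclude. Finally I would record that this forces $\calS$ to be infinite, so $A$ does not have FFRT, completing the proof of Theorem~\ref{theorem:not:FFRT}.
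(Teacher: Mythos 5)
Your choice of group matches the paper's construction exactly, but the detection mechanism you propose is where the proof actually lives, and the routes you sketch do not work. Your preferred plan (a) --- distinguishing infinitely many summands as symbolic powers of a height-one prime via the divisor class group --- fails at the outset: for this $G$ the action is small and $S^*=k^*$, so $\operatorname{Cl}(S^G)$ embeds in $H^1(G,k^*)=\Hom(G,k^*)$, which is trivial because $G$ is an elementary abelian $3$-group and $k$ has characteristic $3$; in fact $S^G$ here is a normal hypersurface, and the genuinely new indecomposable summands of $(S^G)^{1/3^e}$ have rank $3$ over $S^G$, not rank $1$, so no divisorial or conductor invariant can separate them. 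Your fallback (b), an unbounded numerical invariant on $A^{1/p^e}$, runs into exactly the dilution problem you name, and no invariant with the required properties is produced; note also that since $k$ is algebraically closed (hence perfect), $t^{1/3^e}$ already lies in $k$, so ``seeing deeper roots of $t$'' cannot be formalized through field growth or ramification --- the $t$-dependence has to be tracked through the group action itself.

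The missing idea is equivariant. Since $kG\cong k[a,b]/(a^3,b^3)$, there is a one-parameter family $V(\alpha)$ of pairwise non-isomorphic indecomposable $3$-dimensional $G$-modules (distinguished by their annihilators $(b-\alpha a)$), and Frobenius twisting sends $V(t)$ to $V(t^{1/3^e})$. The paper isolates, inside the degree class $1/3^e \pmod{\ZZ}$ of ${}^eS$, the lowest-degree component, which is exactly $V(t^{1/3^e})$; because that $G$-module is indecomposable, there is an indecomposable graded $S*G$-summand $\LInd\big({}^eS_{\langle 1/3^e\rangle}\big)$ carrying it, and these are pairwise non-isomorphic (even after degree shift) as $e$ varies. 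Finally, smallness of the action (which requires $t\notin\FF_3$, and is why the parameter matters) makes the equivalence $M\mapsto M^G$ between graded reflexive $(G,S)$-modules and graded reflexive $S^G$-modules available, transporting these to infinitely many pairwise non-isomorphic indecomposable summands of the modules $(S^G)^{1/3^e}$. Without this equivariant bridge --- or some substitute for it --- your outline does not yet yield a proof.
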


This is proved as Theorem~\ref{theorem:counterexample}; the reader will find that a similar construction may be performed over any algebraically closed field $k$ that is not algebraic over $\FFp$. However, we do not know if $(\Sym V)^G$ always has~FFRT when $V$ is a finite rank vector space over $\overline{\FF}_{\!p}$, the algebraic closure of $\FFp$.

Returning to the nonmodular case, let $k$ be an algebraically closed field of characteristic~$p>0$, and $V$ a finite rank $k$-vector space. Set~$S\colonequals\Sym V$ and $R\colonequals S^G$, for $G$ a finite subgroup of $\GL(V)$ of order coprime to $p$. The rings~$S^{1/q}$ and~$R^{1/q}$ admit $\QQ$-gradings extending the standard $\NN$-grading on the polynomial ring $S$. Let~$M$ be a $\QQ$-graded finitely generated indecomposable $R$-module. By~\cite[Proposition~3.2.1]{Smith:VdB}, the module $M(d)$ is a direct summand of~$R^{1/q}$ for some $d\in\QQ$ if and only if
\[
M\ \cong\ (S\otimes_k L)^G
\]
for some irreducible representation $L$ of $G$. Let $V_1,\dots,V_\ell$ be a complete set of representatives of the isomorphism classes of irreducible representations of $G$, and set
\[
M_i\colonequals(S\otimes_k V_i)^G
\]
for $i=1,\dots,\ell$. Then, for each integer $e\ge 0$, the decomposition of $R^{1/p^e}$ into indecomposable $R$-modules takes the form
\[
R^{1/p^e}\ \cong\ \bigoplus_{i=1}^\ell\bigoplus_{j=1}^{c_{ie}}M_i(d_{ij}),
\]
where $d_{ij}\in\QQ$ and $c_{ie}\in\NN$. Suppose additionally that $G$ does not contain any pseudo\-reflections; by \cite[Theorem~3.4]{Hashimoto:Nakajima}, the \emph{generalized $F$-signature}
\[
s(R,M_i)\colonequals\lim_{e\to\infty}\frac{c_{ie}}{p^{e(\dim R)}}
\]
then agrees with
\[
(\rank_k V_i)/|G|.
\]
By~\cite[Theorem~5.1]{Hashimoto:Symonds}, this description of the asymptotic behavior of~$R^{1/p^e}$ remains valid in the modular case. It follows that for the invariant ring $R\colonequals k[x_1,x_2,x_3]^G$ in Theorem~\ref{theorem:not:FFRT}, while there exist infinitely many nonisomorphic indecomposable $R$-modules that are direct summands of some~$R^{1/p^e}$ up to a degree shift, almost all are \lq\lq asymptotically negligible.\rq\rq\

In \S\ref{section:preliminaries}, we review some basics on the FFRT property and on equivariant modules; these are used in \S\ref{section:counterexample} in the proof of Theorem~\ref{theorem:not:FFRT}. In \S\ref{section:monomial}, we prove that if $V$ is a monomial representation then $(\Sym V)^G$ has FFRT, and also that $(\Sym V)^G$ is $F$-pure in this case; the latter extends a result of Hochster and Huneke~\cite[page~77]{HH:Annals} that $(\Sym V)^G$ is $F$-pure when $V$ is a permutation representation. Lastly, in \S\ref{section:not:fpure}, we construct a family of examples that are not $F$-regular or $F$-pure, but nonetheless have the FFRT property.

\section{Preliminaries}
\label{section:preliminaries}
We collect some definitions and results that are used in the sequel.

\subsection*{Krull-Schmidt category}
Let $k$ be a perfect field of characteristic $p>0$, and $R$ a finitely generated \emph{positively graded} commutative $k$-algebra, i.e., $R$ is $\NN$-graded with~${[R]}_0=k$. Let $R\Qgrmod$ denote the category of finitely generated $\QQ$-graded $R$-modules. For modules~$M,N$ in $R\Qgrmod$, the module $\Hom_R(M,N)$ again lies in $R\Qgrmod$; in particular,
\[
\Hom_{R\Qgrmod}(M,N)\ =\ {[{\Hom_R(M,N)}]}_0
\]
is a finite rank $k$-vector space. Since $\Hom_{R\Qgrmod}(M,M)={[{\Hom_R(M,M)}]}_0$ has finite rank for each $M$ in $R\Qgrmod$, the category $R\Qgrmod$ is Krull-Schmidt; see \cite[\S3]{Hashimoto:Yang}.

\subsection*{Frobenius twist}
Let $e$ be a nonnegative integer. For a $k$-vector space $V$, we use ${}^eV$ to denote the $k$-vector space that coincides with $V$ as an abelian group, but has the left $k$-action~$\alpha\cdot v=\alpha^{p^e}v$ for~$\alpha\in k$ and $v\in V$, with the right action unchanged. An element~$v\in V$, when viewed as an element of ${}^eV$, will be denoted ${}^ev$, so
\[
{}^eV\ =\ \{{}^ev\mid v\in V\}.
\]
The map $v\mapsto {}^ev$ is an isomorphism of abelian groups, but not an isomorphism of $k$-vector spaces in general. Note that $\alpha\cdot {}^ev={}^e(\alpha^{p^e}v)$. When $V$ is $\QQ$-graded, we define a $\QQ$-grading on ${}^eV$ as follows: for a homogeneous element $v\in V$, set
\[
\deg{}^ev\colonequals(\deg v)/p^e.
\]

Let $V$ and $W$ be $k$-vector spaces. For $f\in\Hom_k(V,W)$, we define ${}^e\!f\colon{}^eV\to{}^eW$ by~${}^e\!f({}^ev)={}^e(fv)$. It is easy to see that ${}^e\!f\in\Hom_k({}^eV,{}^eW)$. This makes ${}^e(-)$ an auto-equivalence of the category of $k$-vector spaces. Note that the map
\[
{}^eV\otimes_k\!{}^eW\to{}^e(V\otimes_k W)
\]
with ${}^ev\otimes{}^ew\mapsto{}^e(v\otimes w)$ is well-defined, and an isomorphism. It is easy to check that ${}^e(-)$ is a monoidal functor; the composition ${}^e(-)\circ{}^{e'}(-)$ is canonically isomorphic to ${}^{e+e'}(-)$, and ${}^0(-)$ is the identity.

For a $k$-vector space $V$, the map ${}^e(-)\colon\GL(V)\to\GL({}^eV)$ given by $f\mapsto{}^e\!f$ is an isomorphism of abstract groups. If $V$ is a $G$-module, then the composition
\[
G\to\GL(V)\to\GL({}^eV)
\]
gives ${}^eV$ a $G$-module structure. Thus, $g({}^ev)={}^e(gv)$ for $g\in G$ and $v\in V$. Suppose $x_1,\dots,x_n$ is a $k$-basis of $V$. Then for each integer $e\ge 0$, the elements ${}^ex_1,\dots,{}^ex_n$ form a $k$-basis for~${}^eV$. If $f\in\GL(V)$ has matrix $(m_{ij})$ with respect to the basis $x_1,\dots,x_n$, then the matrix for ${}^e\!f$ with respect to ${}^ex_1,\dots, {}^ex_n$ is $(m_{ij}^{1/p^e})$.
Indeed,
\[
{}^e\!f({}^ex_j)\ =\ {}^e(fx_j)\ =\ {}^e(\sum_im_{ij}x_i)\ =\ \sum_i{}^e(m_{ij}x_i)\ =\ \sum_i m_{ij}^{1/p^e}\cdot {}^ex_i.
\]

When $R$ is a $k$-algebra, the $k$-algebra ${}^eR$ has multiplication defined by $({}^er)({}^es)\colonequals {}^e(rs)$. For $R$ a commutative $k$-algebra, the iterated Frobenius map $F^e\colon R\to {}^eR$ with
\[
r\mapsto {}^e(r^{p^e})
\]
is a homomorphism of~$k$-algebras. When $R$ is a positively graded finitely generated commutative $k$-algebra, the ring ${}^eR$ admits a $\QQ$-grading where for homogeneous $r\in R$,
\[
\deg{}^er\colonequals(\deg r)/p^e.
\]
The ring ${}^eR$ is then positively graded in the sense that ${[{}^eR]}_j=0$ for $j<0$, and ${[{}^eR]}_0=k$. The iterated Frobenius map $F^e\colon R\to {}^eR$ is degree-preserving and module-finite. Moreover,
\[
{}^e(-)\colon R\Qgrmod\to R\Qgrmod
\]
is an exact functor. If $M\in R\Qgrmod$, then the graded $k$-vector space ${}^eM$ is equipped with the~$R$-action~$r\cdot {}^em={}^e(r^{p^e}m)$, so ${}^eM$ is the graded ${}^eR$-module with the action ${}^er\cdot {}^em={}^e(rm)$, and the action of $R$ on ${}^eM$ is induced via $F^e\colon R\to{}^eR$.

When $R$ is reduced, it is sometimes more transparent to use the notation~$r^{1/p^e}$ in place of ${}^er$, and $R^{1/p^e}$ in place of ${}^eR$.

\subsection*{Graded FFRT}
When the equivalent conditions in the following lemma are satisfied, the ring $R$ is said to have finite Frobenius representation type (FFRT) in the graded sense:

\begin{lemma}
Let $R$ be a positively graded finitely generated commutative $k$-algebra. Then the following are equivalent:
\begin{enumerate}[\quad\rm(1)]
\item There exist $M_1,\dots,M_\ell\in R\Qgrmod$ such that for any $e\ge 1$, one has
\[
{}^eR\ \cong\ M_1^{\oplus c_{1e}}\oplus\dots\oplus M_\ell^{\oplus c_{\ell e}}
\]
as (non-graded) $R$-modules.

\item There exist $M_1,\dots,M_\ell\in R\Qgrmod$ such that for any $e\ge 1$, the $R$-module ${}^eR$ is isomorphic, as a $\QQ$-graded $R$-module, to a finite direct sum of copies of modules of the form $M_i(d)$ with $1\le i\le \ell$ and $d\in\QQ$.
\end{enumerate}
\end{lemma}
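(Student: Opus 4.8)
The plan is to deduce (2) $\Rightarrow$ (1) from the fact that forgetting the grading makes a degree shift invisible, and to deduce (1) $\Rightarrow$ (2) by decomposing ${}^eR$ in the Krull--Schmidt category $R\Qgrmod$ and showing that each of its indecomposable summands is a degree shift of one of the $M_i$. The direction (2) $\Rightarrow$ (1) is immediate: a finitely generated $\QQ$-graded $R$-module $M$ and any shift $M(d)$ have the same underlying non-graded $R$-module, so a graded decomposition of ${}^eR$ into shifts of $M_1,\dots,M_\ell$ yields, on forgetting the grading, a decomposition of the form in~(1).

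For (1) $\Rightarrow$ (2) I would first reduce to the case in which each $M_i$ is indecomposable in $R\Qgrmod$: since $R\Qgrmod$ is Krull--Schmidt, each $M_i$ is a finite direct sum of objects indecomposable in $R\Qgrmod$, and replacing $M_1,\dots,M_\ell$ by the (still finite) list of all those indecomposable summands preserves~(1). Now fix $e\ge 1$ and write ${}^eR\cong\bigoplus_j L_j$ in $R\Qgrmod$ with each $L_j$ indecomposable. It suffices to show that each $L_j$ is isomorphic in $R\Qgrmod$ to $M_i(d)$ for some $i$ and some $d\in\QQ$: granting this, ${}^eR$ is, for every $e$, a finite direct sum of shifts of $M_1,\dots,M_\ell$, which is~(2) with $\calS=\{M_1,\dots,M_\ell\}$.

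To prove that claim, fix $j$. Forgetting gradings and using~(1), the module $L_j$ is a non-graded direct summand of $\bigoplus_i M_i^{\oplus c_{ie}}$, so there are non-graded $R$-homomorphisms $\iota_{i,s}\colon L_j\to M_i$ and $\pi_{i,s}\colon M_i\to L_j$ (for $1\le s\le c_{ie}$) with $\sum_{i,s}\pi_{i,s}\circ\iota_{i,s}=\id_{L_j}$. Since $L_j$ and the $M_i$ are finitely generated, the $R$-modules $\Hom_R(L_j,M_i)$ and $\Hom_R(M_i,L_j)$ are $\QQ$-graded, every element being a finite sum of its homogeneous components and composition respecting degree; so extracting the degree-zero part of the identity above writes $\id_{L_j}$ as a \emph{finite} sum, over $i,s$ and a degree $t$, of composites $L_j\to M_i(t)\to L_j$ in $R\Qgrmod$, where the two arrows are, respectively, the degree-$t$ component of $\iota_{i,s}$ and the degree-$(-t)$ component of $\pi_{i,s}$. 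Because $L_j$ is indecomposable in the Krull--Schmidt category $R\Qgrmod$, the ring $\Hom_{R\Qgrmod}(L_j,L_j)$ is local, and since the displayed finite sum equals the unit $\id_{L_j}$, one of the composites is a unit. Hence for suitable $i_0,s_0,t_0$ the degree-$t_0$ component of $\iota_{i_0,s_0}$, viewed as a morphism $L_j\to M_{i_0}(t_0)$ in $R\Qgrmod$, is a split monomorphism; as $M_{i_0}(t_0)$ is indecomposable and $L_j\ne0$, this forces $L_j\cong M_{i_0}(t_0)$.

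The main obstacle is packaging this degree-component bookkeeping carefully: the crucial point is that taking degree-zero components of $\sum_{i,s}\pi_{i,s}\circ\iota_{i,s}=\id_{L_j}$ really does produce a finite sum of honest morphisms in $R\Qgrmod$, which rests on the fact that a homomorphism between finitely generated $\QQ$-graded modules over a positively graded ring is a finite sum of homogeneous homomorphisms. Once this is set up, the locality of endomorphism rings of indecomposable objects in $R\Qgrmod$, together with the indecomposability of the $M_i$ arranged at the start, completes the proof.
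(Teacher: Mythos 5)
Your proof is correct, but it takes a genuinely different route from the paper's. The paper handles (1)\,$\Rightarrow$\,(2) by a regrading trick: it rescales degrees, replacing $R$ and the $M_i$ by $R^{\langle c\rangle}$ and $M_i^{\langle c\rangle}$ with $c$ sufficiently divisible so that $R^{\langle c\rangle}$ is $p^e\ZZ$-graded and each $M_i^{\langle c\rangle}$ is $\ZZ$-graded, and then invokes \cite[Corollary~3.9]{Hashimoto:Yang}, which says precisely that a non-graded isomorphism of ${}^eR^{\langle c\rangle}$ with a direct sum of the $M_i^{\langle c\rangle}$ forces a graded isomorphism with shifts $(M_i^{\langle c\rangle})(d)$; undoing the rescaling gives shifts $M_i(d/c)$. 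You instead reprove the needed statement directly in the $\QQ$-graded category: after reducing to indecomposable $M_i$, you decompose ${}^eR$ in the Krull--Schmidt category $R\Qgrmod$, use the $\QQ$-grading on $\Hom_R(-,-)$ (recorded in the paper's preliminaries, and valid because the source modules are finitely generated) to extract from a non-graded splitting the degree-zero identity $\id_{L_j}=\sum\bigl(\text{composites }L_j\to M_i(t)\to L_j\bigr)$, and conclude from locality of $\Hom_{R\Qgrmod}(L_j,L_j)$ that some $L_j\to M_{i_0}(t_0)$ splits, whence $L_j\cong M_{i_0}(t_0)$ by indecomposability. In effect you give a self-contained proof of (a $\QQ$-graded version of) the cited corollary; this buys independence from the reference and from the rescaling bookkeeping, while the paper's route is shorter on the page because the graded Krull--Schmidt mechanics are delegated to \cite{Hashimoto:Yang}. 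The one step worth stating explicitly in your write-up is the decomposition of an arbitrary homomorphism between finitely generated $\QQ$-graded modules into finitely many homogeneous components, which is exactly the finiteness underlying the paper's assertion that $\Hom_R(M,N)$ again lies in $R\Qgrmod$.
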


\begin{proof}
The direction (2)$\implies$(1) is obvious; we prove the converse. Fix $e\ge 1$. For a positive integer $c$, set $M^{\langle c\rangle}$ to be $M$ with the grading ${[M^{\langle c\rangle}]}_{cj}={[M]}_j$. Then $M^{\langle c\rangle}$ is a $\QQ$-graded module over the graded ring $R^{\langle c\rangle}$. Taking $c$ sufficiently divisible, we may assume that $R^{\langle c\rangle}$ is $p^e\ZZ$-graded and each $M_i^{\langle c\rangle}$ is $\ZZ$-graded. By \cite[Corollary~3.9]{Hashimoto:Yang}, ${}^eR^{\langle c\rangle}$ is isomorphic to a finite direct sum of modules of the form $(M_i^{\langle c\rangle})(d)$ with $1\le i\le \ell$ and~$d\in\ZZ$. It follows that ${}^eR$ is a finite direct sum of modules of the form $M_i(d/c)$.
\end{proof}

It follows from \cite[Corollary~3.9]{Hashimoto:Yang} that $R$ has~FFRT in the graded sense if and only if the $\frakm$-adic completion $\widehat{R}$ has FFRT, for~$\frakm$ the homogeneous maximal ideal of $R$.

\subsection*{Pseudoreflections}
Let $V$ be a finite rank $k$-vector space. An element $g\in\GL(V)$ is a \emph{pseudoreflection} if $\rank(1_V-g)=1$. Let $G$ be a finite group and $V$ a $G$-module. The action of $G$ on $V$ is \emph{small} if $\rho\colon G\to\GL(V)$ is injective, and $\rho(G)$ does not contain a pseudoreflection. If in addition $G\subset\GL(V)$, then $G$ is a \emph{small subgroup of $\GL(V)$}.

\subsection*{The twisted group algebra}

Let $V$ be a finite rank $k$-vector space. Let $G$ be a subgroup of $\GL(V)$, and set $S\colonequals\Sym V$. If~$x_1,\dots,x_n$ is a basis for $V$, then~$\Sym V=k[x_1,\dots,x_n]$ is a polynomial ring in $n$ variables. The action of $G$ on $V$ induces an action of $G$ on the polynomial ring $S$ by degree preserving $k$-algebra automorphisms.

We say that $M$ is a $\QQ$-graded $(G,S)$-module if $M$ is a $G$-module as well as a $\QQ$-graded $S$-module such that the underlying $k$-vector space structures agree, each graded component~${[M]}_i$ is a $G$-submodule of~$M$, and $g(sm)=(gs)(gm)$ for all $g\in G$, $s\in S$, and $m\in M$.

We recall the \emph{twisted group algebra} construction $S*G$ from \cite{Auslander:purity}. Set $S*G$ to be~$S\otimes_k kG$ as a $k$-vector space, with $kG$ the group algebra, and define
\[
(s\otimes g)(s'\otimes g')\colonequals s(gs')\otimes gg'.
\]
For $s\in S$ homogeneous, set the degree of $s\otimes g$ to be that of $s$; this gives~$S*G$ a graded $k$-algebra structure. A $\QQ$-graded $S*G$-module $M$ is a $\QQ$-graded $(G,S)$-module where
\[
sm\colonequals(s\otimes 1)m \qquad\text{ and }\qquad gm\colonequals(1\otimes g)m.
\]
Conversely, if $M$ is a $\QQ$-graded $(G,S)$-module, then $(s\otimes g)m\colonequals sgm$, gives $M$ the structure of a $\QQ$-graded $S*G$-module. Thus, a $\QQ$-graded $S*G$-module and a $\QQ$-graded $(G,S)$-module are one and the same thing. Similarly, a homogeneous (i.e., degree-preserving) map of $\QQ$-graded $(G,S)$-modules is precisely a homomorphism of graded $S*G$-modules.

With this setup, one has the following equivalence of categories:

\begin{lemma}
\label{lemma:equivalence}
Let $V$ be a finite rank $k$-vector space, and $G$ a small subgroup of $\GL(V)$. Set $S\colonequals\Sym V$ and $T\colonequals S*G$. Let $T\Qgrmod$ denote the category of finitely generated $\QQ$-graded left $T$-modules, and ${}^*\Reflex(G,S)$ denote the full subcategory of $T\Qgrmod$ consisting of those that are reflexive as $S$-modules; let ${}^*\Reflex S^G$ denote the full subcategory of~$S^G\Qgrmod$ consisting of modules that are reflexive as $S^G$-modules.

Then one has an equivalence of categories
\[
{}^*\Reflex(G,S)\to {}^*\Reflex S^G,\qquad\text{where}\qquad M\mapsto M^G,
\]
with quasi-inverse $N\to (N\otimes_{S^G}S)^{**}$, where $(-)^{*}\colonequals\Hom_S(-,S)$.
\end{lemma}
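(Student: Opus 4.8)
The plan is to reduce everything to codimension one, where $S^G\subseteq S$ behaves like a Galois extension, and then to globalize by reflexivity. First one checks that both functors land in the stated categories. If $M\in{}^*\Reflex(G,S)$, then $M^G$ is torsion-free over $S^G$, being a submodule of the torsion-free module $M$; and since $S$ is module-finite over $S^G$ one has $M^G=\bigcap_{\operatorname{ht}\mathfrak{q}=1}(M^G)_{\mathfrak{q}}$ inside $(M\otimes_S\Frac S)^G$, because any element of this intersection lies in $\bigcap_{\operatorname{ht}\mathfrak{p}=1}M_{\mathfrak{p}}=M$ and is $G$-invariant, so it lies in $M^G$; hence $M^G$ is reflexive over the normal domain $S^G$. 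Conversely, for $N\in{}^*\Reflex S^G$ the module $(N\otimes_{S^G}S)^{**}$ is reflexive over $S$, carries the $G$-action induced from $S$ and extended to the reflexive hull, and satisfies $g(sm)=(gs)(gm)$, so it lies in ${}^*\Reflex(G,S)$. There are natural morphisms $\eta_N\colon N\to\big((N\otimes_{S^G}S)^{**}\big)^G$, namely $n\mapsto\overline{1\otimes n}$, and $\varepsilon_M\colon(M^G\otimes_{S^G}S)^{**}\to M$, the reflexive hull of the multiplication map $m\otimes s\mapsto sm$; it remains to prove that $\eta_N$ and $\varepsilon_M$ are isomorphisms.

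The source and target of $\eta_N$ are reflexive $S^G$-modules, and those of $\varepsilon_M$ are reflexive $S$-modules. Since a homomorphism of reflexive modules over a normal domain that is an isomorphism at every height-one prime is an isomorphism — both modules equal the intersection of their height-one localizations inside the common generic fibre — it suffices to check $\eta_N$ after localizing $S^G$ at a height-one prime $\mathfrak{q}$, and $\varepsilon_M$ after localizing $S$ at a height-one prime; the latter reduces to the former because $S$ is finite over $S^G$, and the functors involved commute with these localizations. Set $A\colonequals(S^G)_{\mathfrak{q}}$, a discrete valuation ring, and let $B$ be the localization of $S$ at $S^G\setminus\mathfrak{q}$: then $B$ is a semilocal Dedekind domain, being a one-dimensional localization of the regular ring $S$, one has $B^G=A$, and $B$ is finite and torsion-free, hence free, over $A$. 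This is the point at which the hypothesis enters: because $G$ is small, $S^G\subseteq S$ is unramified in codimension one — purity of the branch locus, together with the fact that $G$ contains no pseudoreflection — so $A\to B$ is étale, and since $B^G=A$ it is a $G$-Galois extension in the sense of Auslander and Goldman. Consequently $B*G\cong\operatorname{End}_A(B)$ and $B$ is an $A$-progenerator, so $B*G$ is Morita equivalent to $A$, the equivalence being realized by $M\mapsto M^G$ in one direction and $N\mapsto N\otimes_A B$ in the other; over the semilocal Dedekind domain $B$ and over the discrete valuation ring $A$ every finitely generated torsion-free module is projective, hence reflexive, so the double duals are invisible locally. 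Therefore $\eta_N$ and $\varepsilon_M$ become isomorphisms after localizing at $\mathfrak{q}$, which completes the argument. Finally, all the functors and natural transformations above respect the $\QQ$-gradings — the rings $S$, $S^G$, $T=S*G$ are graded, and tensor products, invariants, $S$-duals, and reflexive hulls of graded modules are graded, with $\eta_N$ and $\varepsilon_M$ degree-preserving — so the displayed equivalence is one of $\QQ$-graded categories.

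The main obstacle is the codimension-one analysis. One must establish that smallness of $G$ forces $S^G\subseteq S$ to be unramified, equivalently étale, in codimension one, and then extract from Auslander--Goldman Galois theory — equivalently, from faithfully flat descent along the étale $G$-torsor $\operatorname{Spec}B\to\operatorname{Spec}A$ — that, locally in codimension one, $B*G$ is Morita equivalent to $A$ in a way compatible with taking $G$-invariants. Once this local picture is in hand, the passage to the global equivalence is routine bookkeeping with reflexive modules over normal domains.
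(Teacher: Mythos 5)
Your argument is the classical Auslander-type proof of this equivalence, and it is worth noting at the outset that the paper itself offers no proof: it simply defers to \cite[Lemma~2.6]{Hashimoto:Kobayahi} (with the group-scheme generalization in \cite{Hashimoto:III}). So your write-up supplies an argument where the paper supplies a citation. The architecture you use is the standard one and is essentially correct: $M^G$ is reflexive over $S^G$ because it coincides with the intersection of its height-one localizations inside the $G$-invariants of the generic fibre; the unit and counit are maps of reflexive modules over normal domains, hence are isomorphisms as soon as they are so in codimension one; localizing at a height-one prime $\mathfrak{q}$ of $S^G$ gives a DVR $A$ and a semilocal Dedekind domain $B=S_{S^G\setminus\mathfrak{q}}$ with $B^G=A$ (invariants commute with localization at subsets of $S^G$ since $S$ is a domain, so no Reynolds operator is needed); and once $A\to B$ is known to be $G$-Galois, the Morita equivalence $B*G\cong\operatorname{End}_A(B)$ identifies your unit and counit with the Morita unit and counit, while double duals are invisible because finitely generated torsion-free modules over $A$ and $B$ are projective. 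The grading bookkeeping is routine, as you say.

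The one step you do not actually prove --- and you flag it yourself as ``the main obstacle'' --- is that smallness forces $S^G\subseteq S$ to be unramified in codimension one, and the justification you offer, purity of the branch locus, is not the right tool: Zariski--Nagata purity says that a nonempty branch locus has pure codimension one (and needs regularity hypotheses that $S^G$ does not satisfy); it cannot convert the absence of pseudoreflections into unramifiedness. The correct argument is the inertia-group computation. For a height-one prime $P$ of $S$ with $\mathfrak{q}=P\cap S^G$, the localization $(S^G)_{\mathfrak{q}}\to S_P$ is unramified precisely when the inertia group $I(P)$ is trivial, since $|I(P)|=e(P/\mathfrak{q})\cdot[\kappa(P):\kappa(\mathfrak{q})]_{\mathrm{insep}}$. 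If $1\neq g\in I(P)$, then $g$ acts trivially on $S/P$, so the hypersurface $V(P)$ lies in the fixed locus of $g$; as $g$ is linear, that fixed locus is the linear subspace $\ker(1_V-g)$, whence $\rank(1_V-g)\le 1$ and $g$ is a pseudoreflection, contradicting smallness (faithfulness is automatic since $G\subset\GL(V)$). With this two-line lemma inserted, $B/A$ is indeed $G$-Galois in the Auslander--Goldman/Chase--Harrison--Rosenberg sense (étale, $B^G=A$, fraction fields Galois with group $G$), and the rest of your local analysis and the globalization go through; the proof is then complete.
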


For the proof, see \cite[Lemma~2.6]{Hashimoto:Kobayahi}; an extension to group schemes may be found in~\cite{Hashimoto:III}. Note that under the functor displayed above, one has ${}^eS\mapsto ({}^eS)^G={}^e(S^G)$.

\section{An invariant ring without FFRT}
\label{section:counterexample}

We construct the counterexample promised in Theorem~\ref{theorem:not:FFRT}; more precisely, we prove:

\begin{theorem}
\label{theorem:counterexample}
Let $k$ be the algebraic closure of $\FF_3(t)$, the rational function field in one indeterminate over $\FF_3$. Let $G$ be the subgroup of $\GL(k^3)$ generated by the matrices
\[
\begin{bmatrix}
1 & 1 & 0 \\
0 & 1 & 1 \\
0 & 0 & 1
\end{bmatrix}
\qquad\text{ and }\qquad
\begin{bmatrix}
1 & t & 0 \\
0 & 1 & t \\
0 & 0 & 1
\end{bmatrix}.
\]
Then $G$ is isomorphic to $\ZZ/3\ZZ\times\ZZ/3\ZZ$. The invariant ring for the natural action of $G$ on the polynomial ring $\Sym(k^3)$ does not have FFRT.
\end{theorem}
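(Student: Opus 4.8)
The plan is to push the problem through the equivalence of Lemma~\ref{lemma:equivalence} down to the twisted group algebra $T\colonequals S*G$, where $S\colonequals\Sym(k^3)=k[x_1,x_2,x_3]$, and then to detect infinitely many pairwise non-isomorphic indecomposable summands of the Frobenius powers ${}^eS$ by reducing to fibers at the origin.

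First I would dispose of the group-theoretic claims. Writing $N\colonequals g_1-\id_{k^3}$, a regular nilpotent, one has $g_1=\id+N$ and $g_2=\id+tN$, so $g_1^3=g_2^3=\id$, the generators commute, and for $(i,j)\not\equiv(0,0)$ the matrix $g_1^ig_2^j-\id=(i+jt)N+(\ast)N^2$ has both superdiagonal entries equal to $i+jt\ne0$ (as $t$ is transcendental over $\FF_3$), hence has rank $2$. This yields $G\cong\ZZ/3\ZZ\times\ZZ/3\ZZ$ and shows $G$ contains no pseudoreflection, so $G$ is a small subgroup of $\GL(k^3)$ and Lemma~\ref{lemma:equivalence} is available. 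Next, assuming for contradiction that $R\colonequals S^G$ has FFRT, I would pass to graded FFRT (via the characterization in \S\ref{section:preliminaries}) and fix finitely many $\QQ$-graded $R$-modules $M_1,\dots,M_\ell$, taken indecomposable and each occurring, with every ${}^eR={}^e(S^G)$ a finite direct sum of shifts $M_i(d)$. Since each $M_i$ is a summand of a module ${}^eR$, which is reflexive over the normal domain $R$, each $M_i$ lies in ${}^*\Reflex S^G$; applying the quasi-inverse of the equivalence of Lemma~\ref{lemma:equivalence} produces $T$-modules $N_i\in{}^*\Reflex(G,S)$ with $N_i^G\cong M_i$. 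As the equivalence is fully faithful, commutes with degree shifts, and sends ${}^eS$ to ${}^eR$, the decomposition ${}^eR\cong\bigoplus_{i,j}M_i(d_{ij})$ pulls back to
\[
{}^eS\ \cong\ \bigoplus_{i,j}N_i(d_{ij})\qquad(e\ge0),
\]
with $\{N_1,\dots,N_\ell\}$ a fixed finite set of $T$-modules.

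Then I would reduce modulo the homogeneous maximal ideal $\frakm_S\colonequals(x_1,x_2,x_3)S$ of $S$. Each $N_i$, being a graded direct summand of the graded free $S$-module ${}^eS$, is itself graded free over $S$, so $P_i\colonequals N_i/\frakm_S N_i$ is a finite-dimensional $kG$-module; reducing the display above modulo $\frakm_S$ (the shifts being irrelevant on fibers) gives ${}^eS/\frakm_S{}^eS\cong\bigoplus_{i,j}P_i$ as $kG$-modules. By Krull-Schmidt for finite-dimensional $kG$-modules, every indecomposable summand of every ${}^eS/\frakm_S{}^eS$ then lies in the finite set $\mathcal P$ of indecomposable $kG$-summands of $P_1\oplus\dots\oplus P_\ell$. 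To contradict this, write $q\colonequals 3^e$ and $z_i\colonequals x_i^{1/q}$, so ${}^eS=k[z_1,z_2,z_3]$ with $\deg z_i=1/q$ and $S=k[z_1^q,z_2^q,z_3^q]$; the key observation is that, since $G$ acts on ${}^eS$ by degree-preserving automorphisms through the Frobenius twist and $\frakm_S{}^eS=(z_1^q,z_2^q,z_3^q){}^eS$ lies in degrees $\ge1>1/q$, the degree-$(1/q)$ component of ${}^eS/\frakm_S{}^eS$ is precisely the $G$-submodule ${}^eV\colonequals kz_1+kz_2+kz_3$, hence a $kG$-direct summand. On ${}^eV$, $g_1$ acts as $\id+N$ and $g_2$ as $\id+t^{1/q}N$ in the basis $z_1,z_2,z_3$; a $kG$-endomorphism must commute with $N$, hence lie in the local ring $k[N]$, so ${}^eV$ is indecomposable; and a $kG$-isomorphism ${}^eV\to{}^{e'}V$ is an invertible $\psi\in k[N]$ with $\psi(\id+t^{1/q}N)=(\id+t^{1/q'}N)\psi$, which (using $\psi N\ne0$) forces $t^{1/q}=t^{1/q'}$ and hence $q=q'$, as $t$ is transcendental over $\FF_3$. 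Thus $\{{}^eV:e\ge0\}$ is an infinite family of pairwise non-isomorphic indecomposable $kG$-modules inside the finite set $\mathcal P$ — the desired contradiction, so $k[x_1,x_2,x_3]^G$ does not have FFRT.

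The step I expect to be the main obstacle is the transfer through Lemma~\ref{lemma:equivalence}: checking that ${}^eS$ and the modules of a putative FFRT decomposition of $S^G$ all lie in the relevant reflexive subcategories, that the equivalence carries the graded decomposition of ${}^eR$ to one of ${}^eS$, and that the resulting $T$-direct summands of ${}^eS$ are $S$-free, so that passing to fibers modulo $\frakm_S$ is well behaved. Once this bookkeeping is in place, the combinatorial core — that ${}^eS/\frakm_S{}^eS$ carries the Frobenius twist ${}^eV$ of the defining representation as a direct summand, and that these twists are pairwise non-isomorphic precisely because $t$ is transcendental over $\FF_3$ — is short.
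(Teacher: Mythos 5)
Your proposal is correct, but it reaches the contradiction by a genuinely different mechanism than the paper. The paper argues directly: using the congruence-class decomposition $M_{\langle d\rangle}$ and the lowest-degree component ($\LRep$, $\LInd$), it exhibits for each $e$ an explicit indecomposable graded $T$-module summand of ${}^eS$ whose lowest-degree piece is $V(t^{1/3^e})$, checks via Lemma~\ref{lemma:counterexample} that these are pairwise nonisomorphic even after degree shift, and then pushes them through the equivalence of Lemma~\ref{lemma:equivalence} to get infinitely many nonisomorphic indecomposable graded summands of ${}^e(S^G)$. You instead argue by contradiction: assuming graded FFRT for $S^G$, you transport the finite decomposition data backwards through the quasi-inverse of Lemma~\ref{lemma:equivalence} (this is legitimate: the $M_i$ are summands of the reflexive $R$-module ${}^eR$, the equivalence is additive and commutes with shifts, and it sends ${}^eS$ to ${}^e(S^G)$), and then specialize to the fiber at $\frakm_S$, where Krull--Schmidt for finite-dimensional $kG$-modules gives a uniform finite list of indecomposables that must contain every degree-$(1/3^e)$ component $V(t^{1/3^e})$ of ${}^eS/\frakm_S\,{}^eS$; your centralizer-of-$N$ computation replaces the paper's socle and annihilator arguments for indecomposability and pairwise non-isomorphism, and is equivalent to them. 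What each approach buys: your fiber argument dispenses with the $\LInd$ bookkeeping and with having to verify non-isomorphism of the graded summands up to shift (shifts disappear on ungraded $kG$-modules), at the cost of the reflexivity/transport bookkeeping you flag; the paper's route is constructive, naming the actual indecomposable summands of ${}^e(S^G)$, which is what feeds the asymptotic discussion in the introduction. Two minor remarks: the $S$-freeness of the $N_i$ is not actually needed (finite generation already makes $N_i/\frakm_S N_i$ a finite-dimensional $kG$-module, and right-exactness of $-\otimes_S S/\frakm_S$ gives the fiber decomposition), and both your argument and the paper's rest on the same bridge from the ungraded FFRT definition to the graded formulation in \S\ref{section:preliminaries}, so your appeal to that characterization is at the same level of detail as the paper's.
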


\begin{lemma}\label{lemma:counterexample}
Let $k\colonequals\overline{\FF_3(t)}$ as above. Let $G=\ZZ/3\ZZ\times\ZZ/3\ZZ=\langle \sigma,\tau\rangle$, where $\sigma^3=\id=\tau^3$, and $\sigma\tau=\tau\sigma$. Then the group algebra $kG$ equals the commutative ring $k[a,b]/(a^3,b^3)$, where~$a\colonequals\sigma-1$ and $b\colonequals\tau-1$. For $\alpha\in k$, set $V(\alpha)$ to be $k^3$ with the $G$-action determined by the homomorphism $G\to\GL_3(k)$ with
\[
\sigma\mapsto\begin{bmatrix}
1 & 1 & 0 \\
0 & 1 & 1 \\
0 & 0 & 1
\end{bmatrix}
\qquad\text{ and }\qquad
\tau\mapsto\begin{bmatrix}
1 & \alpha & 0 \\
0 & 1 & \alpha \\
0 & 0 & 1
\end{bmatrix}.
\]
Then:
\begin{enumerate}[\quad\rm(1)]
\item\label{lemma:counterexample:1} If $\alpha\notin \FF_3$, then the action of $G$ on $V(\alpha)$ is small.
\item\label{lemma:counterexample:2} For $\alpha\neq\beta$ in $k$, the $G$-modules $V(\alpha)$ and $V(\beta)$ are nonisomorphic.
\item\label{lemma:counterexample:3} The Frobenius twist ${}^e(V(\alpha))$ is isomorphic to $V(\alpha^{1/3^e})$ as a $G$-module.
\item\label{lemma:counterexample:4} For each $\alpha\in k$, the $G$-module $V(\alpha)$ is indecomposable.
\end{enumerate}
\end{lemma}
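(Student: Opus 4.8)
The plan is to translate the whole lemma into commutative algebra over the group algebra. Since $\operatorname{char}k=3$ and $G=\langle\sigma\rangle\times\langle\tau\rangle$, one has $\sigma^3-1=(\sigma-1)^3$ and $\tau^3-1=(\tau-1)^3$, so $kG\cong k[a,b]/(a^3,b^3)$ with $a\colonequals\sigma-1$ and $b\colonequals\tau-1$; note that $kG$ is commutative local, with maximal ideal $(a,b)$, residue field $k$, and only the trivial simple module. The key step is to describe $V(\alpha)$ as a $kG$-module: with $x_1,x_2,x_3$ the standard basis of $k^3$ and $N$ the nilpotent operator $x_3\mapsto x_2\mapsto x_1\mapsto 0$, the defining action makes $a$ act as $N$ and $b$ act as $\alpha N$. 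Consequently the $kG$-linear surjection $kG\to V(\alpha)$, $1\mapsto x_3$, kills $b-\alpha a$, and since $\dim_k kG/(b-\alpha a)=3=\dim_k V(\alpha)$ one gets
\[
V(\alpha)\ \cong\ kG/(b-\alpha a)\ \cong\ k[a]/(a^3).
\]
Each of the four parts then falls out of this identification.

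For part~\ref{lemma:counterexample:1}, a group element $g=\sigma^i\tau^j$ acts on $V(\alpha)$ as $(1+N)^i(1+\alpha N)^j=I+(i+\alpha j)N+cN^2$ for some $c\in k$, hence $\rho(g)-I=N\bigl((i+\alpha j)I+cN\bigr)$. If $\alpha\notin\FF_3$ and $g\neq\id$, then $i+\alpha j\neq 0$ in $k$ (since $1,\alpha$ are linearly independent over $\FF_3$), so the right-hand factor is invertible and $\rank(\rho(g)-I)=\rank N=2$. This shows at once that $\rho$ is injective and that $\rho(G)$ contains no pseudoreflection; that is, the action is small.

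For part~\ref{lemma:counterexample:2}, observe that $\operatorname{ann}_{kG}V(\alpha)=(b-\alpha a)$, so an isomorphism $V(\alpha)\cong V(\beta)$ would force $(b-\alpha a)=(b-\beta a)$ and hence $(\alpha-\beta)a\in(b-\alpha a)$; this is impossible when $\alpha\neq\beta$, as $a$ has nonzero image in $kG/(b-\alpha a)\cong k[a]/(a^3)$. Part~\ref{lemma:counterexample:3} is immediate from the matrix description of the Frobenius twist in \S\ref{section:preliminaries}: with respect to the basis ${}^ex_1,{}^ex_2,{}^ex_3$ of ${}^e(V(\alpha))$, the generators $\sigma$ and $\tau$ act by the given matrices with every entry replaced by its $3^e$-th root, and as $0,1$ are fixed while $\alpha\mapsto\alpha^{1/3^e}$, this is precisely the action defining $V(\alpha^{1/3^e})$. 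For part~\ref{lemma:counterexample:4}, $V(\alpha)$ is cyclic over the commutative ring $kG$, so $\operatorname{End}_{kG}(V(\alpha))\cong kG/(b-\alpha a)\cong k[a]/(a^3)$, a local ring; having no nontrivial idempotents, $V(\alpha)$ is indecomposable (equivalently, its $kG$-socle is the one-dimensional space $kx_1$).

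None of the steps is deep. The one requiring a little care is part~\ref{lemma:counterexample:1}: rather than examining the eight nontrivial elements of $G$ one by one, I would run the single computation above with the lone nilpotent $N$, which handles them uniformly. The real content — making parts~\ref{lemma:counterexample:2}--\ref{lemma:counterexample:4} essentially automatic — is the identification $V(\alpha)\cong kG/(b-\alpha a)$.
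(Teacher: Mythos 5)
Your proof is correct and takes essentially the same route as the paper: the same rank computation with the nilpotent $N$ for part (1), the annihilator ideals $(b-\alpha a)$ for part (2), and the Frobenius-twisted matrices for part (3). The only harmless variation is that you make the cyclic presentation $V(\alpha)\cong kG/(b-\alpha a)$ explicit and deduce part (4) from the locality of the endomorphism ring $k[a]/(a^3)$, whereas the paper argues via the one-dimensional socle spanned by $(1,0,0)^\tr$ --- an argument you note as well.
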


\begin{proof}
Setting
\[
N\colonequals \begin{bmatrix}
0 & 1 & 0 \\
0 & 0 & 1 \\
0 & 0 & 0
\end{bmatrix}
\]
and taking $I$ to be the identity matrix, one has
\begin{multline*}
\sigma^i\tau^j\ =\ (I+N)^i(I+\alpha N)^j
\ =\ \left[I+iN+\binom{i}{2}N^2\right]\left[I+j\alpha N+\binom{j}{2}\alpha^2N^2\right] \\
=\ I + (i+j\alpha)N + \left[\binom{i}{2} + ij\alpha +\binom{j}{2}\alpha^2\right]N^2,
\end{multline*}
so $\sigma^i\tau^j-I$ has rank $2$ unless $\alpha\in\FF_3$ or $(i,j)=(0,0)$ in $\FF_3^2$. This proves~\eqref{lemma:counterexample:1}.

For~\eqref{lemma:counterexample:2}, note that the annihilators of $V(\alpha)$ and $V(\beta)$ are the ideals $(b-\alpha a)$ and $(b-\beta a)$ respectively in $kG=k[a,b]/(a^3,b^3)$. These ideals are distinct when $\alpha\neq\beta$.

The representation matrices for $\sigma$ and $\tau$ in $\GL({}^e(V(\alpha)))$ are
\[
{}^e(I+N)=I+N\qquad\text{ and }\qquad {}^e(I+\alpha N)=I+\alpha^{1/3^e}N
\]
respectively, so ${}^eV(\alpha)\cong V(\alpha^{1/3^e})$ as $G$-modules, proving~\eqref{lemma:counterexample:3}.

For~\eqref{lemma:counterexample:4}, note that $kG$ is an artinian local ring, so each nonzero $kG$-module has a nonzero socle. The socle of $V(\alpha)$ is spanned by the vector $(1,0,0)^\tr$, and hence has rank one. It follows that $V(\alpha)$ is an indecomposable $kG$-module.
\end{proof}

\begin{proof}[Proof of Theorem~\ref{theorem:counterexample}]
Set $S$ to be the polynomial ring $\Sym(k^3)$, and $T\colonequals S*G$. For $M$ a nonzero module in~$T\Qgrmod$, set
\[
\LD(M)\colonequals\min\{i\in\QQ\mid {[M]}_i\neq 0\}
\qquad\text{ and }\qquad
\LRep(M)\colonequals {[M]}_{\LD(M)},
\]
i.e., $\LRep(M)$ is the nonzero $\QQ$-graded component of $M$ of least degree. Note that for~$d$ a rational number, $\LRep(M(d))$ and $\LRep(M)$ are isomorphic as $G$-modules.

As $T\Qgrmod$ is Krull-Schmidt, there is a unique decomposition $M=N_1\oplus\cdots\oplus N_r$ of~$M$ into indecomposable objects. Setting $d\colonequals\LD(M)$, we have
\[
\LRep(M)\ =\ {[M]}_d\ =\ {[N_1]}_d\oplus\cdots\oplus{[N_r]}_d.
\]
Suppose $\LRep(M)$ is an indecomposable $G$-module. After a possible change of indices, we may assume that $\LRep(M)={[N_1]}_d$ and that ${[N_j]}_d=0$ for $j>1$. Note that, up to isomorphism, $N_1$ is the unique indecomposable direct summand of $M$ with~$\LD(N_1)=\LD(M)$. We define $\LInd(M)\colonequals N_1$. Note that we have $\LRep(N_1)\cong\LRep(M)$.

For $M$ as above, and $d\in\QQ$, define
\[
M_{\langle d\rangle}\colonequals \!\!\!\!\!\! \bigoplus_{i\equiv d\mod\ZZ} \!\!\! {[M]}_i,
\]
which is also an element of $T\Qgrmod$.

Since the degree $1/3^e$-component of ${}^eS$ is ${}^eV(t) = V(t^{1/3^e})$, one has
\[
\LRep\big({}^eS_{\langle 1/3^e\rangle}\big)\ =\ V(t^{1/3^e}),
\]
which is indecomposable by Lemma~\ref{lemma:counterexample}\,\eqref{lemma:counterexample:4}. The $G$-modules $V(t)$, $V(t^{1/3})$, $V(t^{1/3^2})$, $\dots$ are nonisomorphic by Lemma~\ref{lemma:counterexample}\,\eqref{lemma:counterexample:2}, so the isomorphism classes of the indecomposable $T$-modules
\[
\LInd\big(S_{\langle 1\rangle}\big),\quad \LInd\big({}^1S_{\langle 1/3\rangle}\big),\quad \LInd\big({}^2S_{\langle 1/3^2\rangle}\big),\quad \dots
\]
are distinct; specifically, any two of these indecomposable objects of $\Qgrmod T$ are nonisomorphic even after a degree shift. By Lemma~\ref{lemma:equivalence}, it follows that the indecomposable $\QQ$-graded $S^G$-modules
\[
\Big(\LInd\big(S_{\langle 1\rangle}\big)\Big)^G,\quad \Big(\LInd\big({}^1S_{\langle 1/3\rangle}\big)\Big)^G,\quad \Big(\LInd\big({}^2S_{\langle 1/3^2\rangle}\big)\Big)^G,\quad \dots
\]
are nonisomorphic. These occur as indecomposable summands of ${}^e(S^G)$ for $e\ge 1$, so the ring $S^G$ does not have FFRT.
\end{proof}

\begin{remark}
\label{remark:presentation}
For the interested reader, we give a presentation of the invariant ring $S^G$ in Theorem~\ref{theorem:counterexample}. This was obtained using \texttt{Magma}~\cite{Magma}, though one may verify all claims by hand, after the fact. Take $S\colonequals \Sym V$ to be the polynomial ring $k[x_1,x_2,x_3]$, where the indeterminates $x_1,x_2,x_3$ are viewed as the standard basis vectors in $V\colonequals k^3$. Then the invariant ring $S^G$ is generated by the polynomials
\begin{alignat*}3
f_1 & \colonequals x_1,\\
f_3 & \colonequals tx_1^2x_2 - (t+1)x_1^2x_3 - (t+1)x_1x_2^2 + x_2^3,\\
f_5 & \colonequals t(t-1)^2x_1^4x_3 + t(t^2+1)x_1^3x_2^2 - t(t+1)x_1^3x_2x_3 - (t+1)^2x_1^3x_3^2 - (t+1)(t-1)^2x_1^2x_2^3\\
& \qquad + (t+1)^2x_1^2x_2^2x_3 + x_1^2x_3^3 - (t-1)^2x_1x_2^4 - (t+1)x_1x_2^3x_3 - (t+1)x_2^5,\\
f_9 & \colonequals x_3(x_2+x_3)(x_1-x_2+x_3)(tx_2+x_3)(tx_1+x_2+tx_2+x_3)(x_1-tx_1-x_2+tx_2+x_3)\\
& \qquad \times (t^2x_1-tx_2+x_3)(t^2x_1-tx_1+x_2-tx_2+x_3)(x_1+tx_1+t^2x_1-x_2-tx_2+x_3),
\end{alignat*}
where $f_9$ is the product over the orbit of $x_3$. These four polynomials satisfy the relation
\begin{multline*}
t(t-1)^2(t^2+1)f_1^3f_3^4 - t^2(t-1)^2f_1^4f_3^2f_5 + (t^3+1)f_3^5 + (t^3+1)f_1f_3^3f_5 - f_1^6f_9 + f_5^3
\end{multline*}
that defines a normal hypersurface. Using this defining equation, one may see that $S^G$ is not $F$-pure. The defining equation also confirms that the $a$-invariant is $a(S^G)=-3$, as follows from~\cite[Theorem~3.6]{Hashimoto:a:inv} or \cite[Theorem~4.4]{GJS} since $G$ is a subgroup of $\SL(V)$ without pseudoreflections.
\end{remark}

\section{Ring of invariants of monomial actions}
\label{section:monomial}

Let $k$ be a field of positive characteristic, and let $G$ be a finite group. Consider a finite rank $k$-vector space $V$ that is a $G$-module. A $k$-basis $\Gamma$ of $V$ is a \emph{monomial basis} for the action of $G$ if for each $g\in G$ and $\gamma\in\Gamma$, one has $g\gamma\in k\gamma'$ for some $\gamma'\in\Gamma$. We say that~$V$ is a \emph{monomial representation} of $G$ if $V$ admits a monomial basis.

A monomial representation $V$ as above is a \emph{permutation representation} of $G$ if $V$ admits a $k$-basis $\Gamma$ such that each $g\in G$ permutes the elements of $\Gamma$.

\begin{theorem}
\label{theorem:monomial}
Let $k$ be a perfect field of positive characteristic, $G$ a finite group, and $V$ a monomial representation of $G$ over $k$. Then the ring of invariants $(\Sym V)^G$ has FFRT.
\end{theorem}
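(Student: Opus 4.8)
The plan is to trace the decomposition of $S^{1/p^e}$, where $S\colonequals\Sym V$, through the fine multigrading on $S$ and then pass to $G$-invariants. Fix a monomial basis $x_1,\dots,x_n$ of $V$, so that $S=k[x_1,\dots,x_n]$ and each $g\in G$ acts by $gx_i=c_{g,i}\,x_{\pi_g(i)}$ for a scalar $c_{g,i}\in k^\times$ and a permutation $\pi_g\in\frakS_n$; the map $g\mapsto\pi_g$ is a group homomorphism $G\to\frakS_n$. Writing $x^{\mathbf a}\colonequals x_1^{a_1}\cdots x_n^{a_n}$, the first observation is that $S^{1/p^e}$ is free as an $S$-module on the monomials $x^{\mathbf b/p^e}$ with $\mathbf b\in\{0,\dots,p^e-1\}^n$, and that $G$ permutes these rank-one free summands: since $k$ is perfect, $g\big(x^{\mathbf b/p^e}\big)$ equals a unit scalar times $x^{(\pi_g\mathbf b)/p^e}$, so the action of $G$ on the set of summands is the action of $G$ on $\{0,\dots,p^e-1\}^n$ through $g\mapsto\pi_g$. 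Collecting summands into $G$-orbits yields a decomposition $S^{1/p^e}=\bigoplus_{[\mathbf b]}M_{[\mathbf b]}$ into $(G,S)$-submodules.

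Next I would pass to invariants. Using the identification $(S^G)^{1/p^e}=(S^{1/p^e})^G$, this gives $(S^G)^{1/p^e}=\bigoplus_{[\mathbf b]}(M_{[\mathbf b]})^G$ as $\QQ$-graded $S^G$-modules. For an orbit with chosen representative $\mathbf b$ and stabilizer $H\colonequals\{g\in G:\pi_g\mathbf b=\mathbf b\}$, the module $M_{[\mathbf b]}$ is induced from the $(H,S)$-module $x^{\mathbf b/p^e}S$, and an elementary argument---projecting onto the chosen summand and using transitivity of $G$ on the orbit of summands in place of the Reynolds operator, which is unavailable in the modular case---identifies $(M_{[\mathbf b]})^G$ with $\big(x^{\mathbf b/p^e}S\big)^H$. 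Since $\pi_h\mathbf b=\mathbf b$ for $h\in H$, one has $h\big(x^{\mathbf b/p^e}\big)=\chi_{\mathbf b}(h)\,x^{\mathbf b/p^e}$ with $\chi_{\mathbf b}(h)=\big(\prod_i c_{h,i}^{b_i}\big)^{1/p^e}$, and $\chi_{\mathbf b}\colon H\to k^\times$ is a character; identifying $x^{\mathbf b/p^e}S\cong S$ as $S$-modules then realizes $(M_{[\mathbf b]})^G$, up to a degree shift by $-(\sum_i b_i)/p^e$, as the module of semiinvariants $\{s\in S:hs=\chi_{\mathbf b}(h)^{-1}s\ \text{for all}\ h\in H\}$. (When $V$ is a permutation representation every $\chi_{\mathbf b}$ is trivial, and these are just the ordinary invariant rings $S^H$.)

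It remains to show that, as $e$ and $\mathbf b$ vary, only finitely many pairs $(H,\chi_{\mathbf b})$ occur; since $G$ has finitely many subgroups, the content lies in bounding the characters, and this is the step I expect to be the main obstacle. One might hope the scalars $c_{g,i}$ are roots of unity, but this fails in general---a transposition of two basis vectors may be scaled by a transcendental element---so the bound cannot be read off componentwise. The way around this is that $\mathbf b$ is constant along each cycle of $\pi_h$, so that $\prod_i c_{h,i}^{b_i}=\prod_{C}\big(\prod_{i\in C}c_{h,i}\big)^{b_C}$ (with $b_C$ the common value of $b_i$ for $i\in C$) depends only on the cycle-products $\prod_{i\in C}c_{h,i}$, and each such cycle-product is an eigenvalue of the finite-order element $h^{|C|}$, hence a root of unity whose order divides the exponent of $G$. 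Therefore $\chi_{\mathbf b}$ takes values in the finite group $\mu$ of roots of unity in $k$ of that bounded order, so $\chi_{\mathbf b}\in\Hom(H,\mu)$, a finite set. Consequently, for every $e$, the module $(S^G)^{1/p^e}$ is a finite direct sum of degree shifts of modules drawn from the finite list $\{\,\{s\in S:hs=\chi(h)s\ \text{for all}\ h\in H\}:\ H\le G,\ \chi\in\Hom(H,\mu)\,\}$, and each such module is a finitely generated $S^G$-module, being an $S^G$-submodule of $S$ and $S$ being module-finite over $S^G$. Decomposing these finitely many modules into indecomposables using the Krull-Schmidt property of $S^G\Qgrmod$ produces a finite set $\calS$ witnessing that $S^G$ has FFRT. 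Apart from the cycle-product observation, the argument is routine bookkeeping with the multigrading.
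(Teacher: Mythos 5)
Your proposal is correct and follows essentially the same route as the paper: decompose $S^{1/p^e}$ over the $G$-orbits of the monomial basis, identify the invariants of each orbit module with the $\chi$-semiinvariants of $S$ over the line stabilizer $H$ (the paper does this via the explicit isomorphism $\sum_i g_i\colon (S\mu)^H\to(S\gamma_\mu)^G$, which is exactly your projection/transitivity argument), and conclude from the finiteness of the pairs $(H,\chi)$ together with Krull--Schmidt in $S^G\Qgrmod$. The only difference is cosmetic: your cycle-product detour for bounding the character values is unnecessary, since $\chi_{\mathbf b}\colon H\to k^\times$ is a homomorphism from a finite group and so automatically takes values in roots of unity of order dividing the exponent of $H$, which is the (implicit) reason the paper can simply invoke that $H$ has only finitely many rank one representations.
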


\begin{proof}
Set $q\colonequals p^e$, where $k$ has characteristic $p$ and $e\in\NN$. The action of $G$ on $S\colonequals\Sym V$ extends uniquely to an action of $G$ on~${}^eS=S^{1/q}$; note that
\[
(S^{1/q})^G\ =\ (S^G)^{1/q}.
\]
Let $\{x_1,\dots,x_n\}$ be a monomial basis for $V$. The ring $S^{1/q}$ then has an $S$-basis
\begin{equation}
\label{equation:basis}
B_e\colonequals\Big\{x_1^{\lambda_1/q}\cdots\, x_n^{\lambda_n/q}\ \mid\ \lambda_i\in\ZZ,\quad 0\le \lambda_i\le q-1\Big\}.
\end{equation}
For $\mu\in B_e$, set $\gamma_\mu$ to be the $k$-vector space spanned by the elements $g\mu$ for all $g\in G$. Then~$S^{1/q}$ is a direct sum of modules of the form $S\gamma_\mu$, and the action of $G$ on~$S^{1/q}$ restricts to an action on each $S\gamma_\mu$. To prove that $S^G$ has FFRT, it suffices to show that there are only finitely many isomorphism classes of $S^G$-modules of the form
\[
(S\gamma_\mu)^G\ =\ \Big(\sum_{g\in G}Sg\mu\Big)^G
\]
as $e$ varies. Fix $\mu\in B_e$, and consider the rank one $k$-vector space $k\mu$. Set
\[
H\colonequals\{g\in G\ \mid\ g\mu\in k\mu\}.
\]
Let $g_1,\dots,g_m$ be a set of left coset representatives for $G/H$, where $g_1$ is the group identity. We claim that the map
\begin{equation}
\label{equation:perm:map}
\sum_{i=1}^m g_i\colon (S\mu)^H\ \to\ (S\gamma_\mu)^G
\end{equation}
is an isomorphism of $\QQ$-graded $S^G$-modules. Assuming the claim, $(S\mu)^H=(S\otimes_k k\mu)^H$ is isomorphic, up to a degree shift, with a module of the form $(S\otimes_k\chi)^H$, where $\chi$ is a rank one representation of~$H$. Since there are only finitely many subgroups $H$ of~$G$, only finitely many rank one representations $\chi$ of $H$, and only finitely many isomorphism classes of indecomposable $\QQ$-graded $S^G$-summands of $(S\otimes_k\chi)^H$ by the Krull-Schmidt theorem, the claim indeed completes the proof.

It remains to verify the isomorphism~\eqref{equation:perm:map}. Given $g\in G$, there exists a permutation~$\sigma\in\frakS_m$ such that $gg_i=g_{\sigma i}h_i$ for each $i$, with $h_i\in H$. Given $s\mu\in (S\mu)^H$, one has
\[
g\Big(\sum_i g_i(s\mu)\Big)\ =\ \sum_i g_{\sigma i}h_i(s\mu)\ =\ \sum_i g_{\sigma i}(s\mu)\ =\ \sum_i g_i(s\mu),
\]
so $\sum_i g_i(s\mu)$ indeed lies in $(S\gamma_\mu)^G$. Since each $g_i$ is $S^G$-linear and preserves degrees, the same holds for their sum. As to the injectivity, if
\[
\sum_i g_i(s\mu)\ =\ \sum_i (g_is)(g_i\mu)\ =\ 0,
\]
then $g_is=0$ for each $i$, since $g_1\mu,\dots,g_m\mu$ are distinct elements of the basis $B_e$ as in~\eqref{equation:basis}, and hence linearly independent over $S$. But then $s=0$. For the surjectivity, first note that an element of $S\gamma_\mu$ may be written as $\sum_i s_ig_i\mu$. Consider
\[
f\colonequals s_1g_1\mu + s_2g_2\mu + \dots + s_mg_m\mu\ \in\ (S\gamma_\mu)^G.
\]
Apply $g_i$ to the above; since $g_if=f$, and $g_1\mu,\dots,g_m\mu$ are linearly independent over $S$, it follows that $g_is_1=s_i$. But then
\[
f\ =\ \sum_i g_i(s_1\mu),
\]
so it remains to show that $s_1\mu\in (S\mu)^H$. Fix $h\in H$. Since $hf=f$, one has
\[
\sum_i hg_i(s_1\mu) \ =\ \sum_i g_i(s_1\mu).
\]
As $hg_1\in H$ and $hg_i\notin H$ for $i\ge 2$, the linear independence of $g_1\mu,\dots,g_m\mu$ over $S$ implies that $h(s_1\mu)=s_1\mu$.
\end{proof}

\begin{remark}
\label{remark:f:pure}
For $k$ a field of positive characteristic, and $V$ a finite rank permutation representation of~$G$, Hochster and Huneke showed that the invariant ring~$(\Sym V)^G$ is $F$-pure~\cite[page~77]{HH:Annals}; the same holds more generally when $V$ is a monomial representation:

It suffices to prove the $F$-purity in the case where the field $k$ is perfect. With the notation as in the proof of Theorem~\ref{theorem:monomial}, $(S^G)^{1/q}$ is a direct sum of $S^G$-modules of the form $(S\gamma_\mu)^G$, where $\gamma_\mu$ is the $k$-vector space spanned by $g\mu$ for $g\in G$. When $\mu\colonequals 1$ one has $\gamma_\mu=k$, so~$S^G$ indeed splits from $(S^G)^{1/q}$.
\end{remark}

\begin{remark}
In Theorem~\ref{theorem:monomial} suppose, moreover, that $V$ is a permutation representation of~$G$. Then one may choose a basis $\{x_1,\dots,x_n\}$ for $V$ whose elements are permuted by~$G$. In this case, each $g\in G$ permutes the elements of $B_e$ for $e\in\NN$, and each rank one representation $\chi\colon H\to k^*$ is trivial; it follows that $(S^G)^{1/q}$ is a direct sum of $S^G$-modules of the form $S^H$, for subgroups~$H$ of $G$.
\end{remark}

\begin{example}
Let $p$ be a prime integer. Set $S\colonequals\FFp[x_1,\dots,x_p]$, and let $G\colonequals\langle\sigma\rangle$ be the cyclic group of order $p$ acting on $S$ by cyclically permuting the variables. The ring $S^G$ has~FFRT by Theorem~\ref{theorem:monomial}. Let $q=p^e$ be a varying power of $p$.

If $p=2$, then $S^G$ is a polynomial ring, and each $(S^G)^{1/q}$ is a free $S^G$-module; thus, up to isomorphism and degree shift, the only indecomposable summand of $(S^G)^{1/q}$ is $S^G$.

Suppose $p\ge 3$. For $\mu\in B_e$, consider the $kG$-submodule~$\gamma_\mu=kg\mu$ of $S^{1/q}$. If the stabilizer of $\mu$ is $G$, then $\gamma_\mu = k\mu$ is an indecomposable~$kG$ module, and $(S\mu)^G = S^G\mu \cong S^G$ is an indecomposable $S^G$-summand of $(S^G)^{1/q}$. Since the only subgroups of $G$ are $\{\id\}$ and~$G$, the only other possibility for the stabilizer of an element $\mu$ of $B_e$ is $\{\id\}$, in which case the orbit is a \emph{free orbit}, i.e., an orbit of size $|G|$, and $\gamma_\mu\cong kG$. We claim that
\[
(S\otimes_k kG)^G \cong S
\]
is an indecomposable $S^G$-module. Since the group $G$ contains no pseudoreflections in the case $p\ge 3$, Lemma~\ref{lemma:equivalence} is applicable, and it suffices to verify that $S\otimes_k kG$ is an indecomposable graded $(G,S)$-module. Note that~$kG=k[\sigma]/(1-\sigma)^p$ is an indecomposable $kG$-module. Suppose one has a decomposition as graded $(G,S)$-modules
\[
S \otimes_k kG\ \cong\ P_1 \oplus P_2,
\]
apply $(-)\otimes_S S/\frakm$ where $\frakm$ is the homogeneous maximal ideal of~$S$. Then
\[
kG\ \cong\ P_1/\frakm P_1 \oplus P_2/\frakm P_2.
\]
The indecomposability of $kG$ implies that $P_i/\frakm P_i=0$ for some $i$. But then Nakayama's lemma, in its graded form, gives $P_i=0$, which proves the claim. Lastly, it is easy to see that both of these types of $G$-orbits appear in $B_e$ if $e\ge 1$ so, up to isomorphism and degree shift, the indecomposable $S^G$-summands of $(S^G)^{1/q}$ are indeed $S^G$ and $S$.
\end{example}

\begin{example}
As a specific example of the above, consider the alternating group $A_3$ with its natural permutation action on the polynomial ring $S\colonequals \FF_3[x_1,x_2,x_3]$. For $q=3^e$, consider the $S$-basis~\eqref{equation:basis} for $S^{1/q}$. It is readily seen that the monomials
\[
(x_1x_2x_3)^{\lambda/q}\qquad\text{ where }\lambda\in\ZZ,\quad 0\le \lambda\le q-1
\]
are fixed by $A_3$, whereas every other monomial in $B_e$ has a free orbit. It follows that, ignoring the grading, the decomposition of ${(S^{A_3})}^{1/q}$ into indecomposable $S^{A_3}$-modules is
\[
{(S^{A_3})}^{1/q}\ \cong\ {(S^{A_3})}^q\oplus S^{(q^3-q)/3}.
\]
\end{example}

\begin{example}
Let $k$ be a perfect field of characteristic $2$ that contains a primitive third root~$\omega$ of unity. Let $G$ be the group generated by
\[
\sigma\colonequals\begin{bmatrix}
\omega & 0 \\
0 & \omega
\end{bmatrix}
\]
acting on $S\colonequals k[x_1,x_2]$. The invariant ring $S^G$ is the Veronese subring
\[
{k[x_1,x_2]}^{(3)}\ =\ k[x_1^3,\ x_1^2x_2,\ x_1x_2^2,\ x_2^3].
\]
The action of $G$ on $S$ extends to an action on $S^{1/q}$ where $\sigma(x_i^{1/q})=\omega^q x_i^{1/q}$. For $B_e$ as in~\eqref{equation:basis}, consider
\[
S^{1/q}\ =\ \bigoplus_{\mu\in B_e} S\mu.
\]
Suppose $\mu=x_1^{\lambda_1/q}x_2^{\lambda_2/q}$, where $\lambda_i$ are integers with $0\le \lambda_i\le q-1$. Then
\[
(S\mu)^G\ =\ \begin{cases}
S^G\mu & \text{if}\quad\lambda_1+\lambda_2\equiv 0\ \mod 3,\\
S^G x_1\mu + S^G x_2\mu & \text{if}\quad\lambda_1+\lambda_2\equiv 2q\ \mod 3,\\
S^G x_1^2\mu + S^G x_1x_2\mu + S^G x_2^2\mu & \text{if}\quad\lambda_1+\lambda_2\equiv q\ \mod 3.
\end{cases}
\]
The $S^G$-modules that occur in the three cases above are, respectively, isomorphic to the ideals $S^G$, $(x_1^3,x_1^2x_2)S^G$, and~$(x_1^3,x_1^2x_2,x_1x_2^2)S^G$, that constitute the indecomposable summands of $S^{1/q}$. The number of copies of each of these is \emph{asymptotically} $q^2/3$.

This extends readily to Veronese subrings of the form $k[x_1,x_2]^{(n)}$, for $k$ a perfect field of characteristic $p$ that contains a primitive $n$th root of unity; see \cite[Example~17]{Huneke:Leuschke}.
\end{example}

\begin{example}
Let $G\colonequals\langle\sigma\rangle$ be a cyclic group of order $4$, acting on $S\colonequals\FF_2[x_1,x_2,x_3,x_4]$ by cyclically permuting the variables. In view of~\cite{Bertin}, the invariant ring $S^G$ is a UFD that is not Cohen-Macaulay; $S^G$ has FFRT by Theorem~\ref{theorem:monomial}.

We describe the indecomposable summands that occur in an $S^G$-module decomposition of $(S^G)^{1/q}$ for $q=2^e$. The group $G$ contains no pseudoreflections, so Lemma~\ref{lemma:equivalence} applies. Consider the $S$-basis $B_e$ for $S^{1/q}$, as in~\eqref{equation:basis}. The monomials
\[
(x_1x_2x_3x_4)^{\lambda/q}\qquad\text{ where }0\le \lambda\le q-1
\]
are fixed by $G$; each such monomial $\mu$ gives an indecomposable~$kG$ module $\gamma_\mu = k\mu$, and an indecomposable $S^G$-summand $(S\mu)^G \cong S^G$ of $(S^G)^{1/q}$. The monomials $\mu$ of the form
\[
(x_1x_3)^{\lambda_1/q}(x_2x_4)^{\lambda_2/q}\qquad\text{ with }0\le \lambda_i\le q-1, \quad \lambda_1\neq\lambda_2
\]
have stabilizer $H\colonequals\langle\sigma^2\rangle$. In this case, $\gamma_\mu\cong k[\sigma]/(1-\sigma)^2$ is an indecomposable~$kG$ module, corresponding to an indecomposable $S^G$-summand $(S\otimes_k \gamma_\mu)^G\cong S^H$. Any other monomial in $B_e$ has a free orbit that corresponds to a copy of $(S\otimes_k kG)^G\cong S$.

Ignoring the grading, the decomposition of ${(S^G)}^{1/q}$ into indecomposable $S^G$-modules is
\[
{(S^G)}^{1/q}\ \cong\ {(S^G)}^q \oplus {(S^H)}^{(q^2-q)/2} \oplus S^{(q^4-q^2)/4}.
\]

\end{example}

\section{Examples that are FFRT but not \texorpdfstring{$F$}{F}-regular}
\label{section:not:fpure}

The notion of $F$-regular rings is central to Hochster and Huneke's theory of tight closure, introduced in~\cite{HH:JAMS}; while there are different notions of $F$-regularity, they coincide in the graded case under consideration here by~\cite[Corollary~4.3]{Lyubeznik:Smith}, so we downplay the distinction. The~FFRT property and $F$-regularity are intimately related, though neither implies the other: The hypersurface
\[
\FFp[x,y,z]/(x^2+y^3+z^5)
\]
has FFRT for each prime integer $p$, though it is not $F$-regular if $p\in\{2,3,5\}$; Stanley-Reisner rings have FFRT by~\cite[Example~2.3.6]{Kamoi}, though they are $F$-regular only if they are polynomial rings. For the other direction, the hypersurface
\[
R\colonequals\FFp[s,t,u,v,w,x,y,z]/(su^2x^2 + sv^2y^2 + tuvxy + tw^2z^2)
\]
is $F$-regular for each prime integer $p$, but admits a local cohomology module $H^3_{(x,y,z)}(R)$ with infinitely many associated prime ideals, \cite[Theorem~5.1]{Singh:Swanson}, and hence does not have~FFRT by~\cite[Corollary~3.3]{Takagi:Takahashi} or~\cite[Theorem~1.2]{Hochster:Nunez}. Nonetheless, for the invariant rings of finite groups that are our focus here, $F$-regularity implies FFRT; this follows readily from well-known results, but is recorded here for the convenience of the reader:

\begin{proposition}
Let $k$ be a perfect field, $G$ a finite group, and $V$ a finite rank $k$-vector space that is a $G$-module. If the invariant ring $(\Sym V)^G$ is $F$-regular, then it has FFRT.
\end{proposition}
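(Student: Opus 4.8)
The plan is to reduce to the case where $k$ is infinite and $G$ acts on $S \colonequals \Sym V$ without pseudoreflections, and then invoke the classification of graded summands of $R^{1/q}$ together with the characterization of strongly $F$-regular rings of invariants. First I would recall that $F$-regularity (strong $F$-regularity, which coincides with the other notions in the graded setting by the cited result of Lyubeznik--Smith) passes to and from completion, and that FFRT in the graded sense is equivalent to FFRT of $\widehat{R}$; so it is harmless to work locally. Next, since $k$ is perfect, $G$ acts on $S^{1/q} = {}^eS$ compatibly, and $R^{1/q} = (S^{1/q})^G = (S^G)^{1/q}$. The key structural input is that $R$ being $F$-regular forces $R$ to be (strongly) $F$-regular, hence a normal Cohen--Macaulay domain, and in particular the invariant ring of a \emph{small} action after factoring out the pseudoreflection subgroup.

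\textbf{Reduction to a small action.} Let $W \subseteq G$ be the subgroup generated by all pseudoreflections in $\rho(G) \subseteq \GL(V)$. By a theorem of Shephard--Todd/Chevalley--Serre in the possibly modular setting --- or more simply, by passing to $S^W$, which is a polynomial ring when $W$ is generated by pseudoreflections and $S^W$ is polynomial, though in the modular case this needs care --- the safer route is: $R = S^G = (S^W)^{G/W}$ only when $W$ is normal; in general one does not need this. Instead, I would argue directly: $R$ is a graded direct summand of $S^{1/1} = S$? No --- in the modular case $R$ need not be a summand of $S$. This is precisely the obstacle. The clean path is to use that $F$-regular invariant rings $R = S^G$ are known, by work of Hochster--Huneke and Broer, to coincide (for the purpose of summands of Frobenius powers) with the nonmodular-type picture: $R$ strongly $F$-regular implies the Reynolds-type splitting $R \hookrightarrow S$ splits as $R$-modules. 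Indeed, since $R$ is strongly $F$-regular and $S$ is a module-finite extension domain with the same fraction field degree behavior, $R$ is a direct summand of $S$ as an $R$-module (strong $F$-regularity gives that $R$ splits from every module-finite extension domain $S$ with $R \subseteq S$; this is the defining feature). Therefore $R$ is a graded direct summand of the polynomial ring $S$.

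\textbf{Conclusion via Smith--Van den Bergh.} Once $R$ is a graded direct summand of the polynomial ring $S = \Sym V$ as an $R$-module, the result is immediate: by \cite[Proposition~3.1.4]{Smith:VdB} (equivalently the graded statement via the Lemma in \S\ref{section:preliminaries}), a graded direct summand of a polynomial ring over a perfect field has FFRT. Concretely, $S^{1/q}$ decomposes with only the summand types coming from $S$ being a free module over its own Frobenius power, and restricting idempotents to $R$ cuts out finitely many isomorphism classes of $R$-summands; so $R^{1/q}$, being an $R$-summand of $S^{1/q}$, has its indecomposable summands drawn from a finite set independent of $q$.

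\textbf{Main obstacle.} The crux is justifying that an $F$-regular invariant ring $R = S^G$ is a direct summand of $S$ as an $R$-module even in the modular case. This is where I expect to lean on the defining splitting property of strong $F$-regularity: if $R$ is strongly $F$-regular (Noetherian, reduced, with module-finite Frobenius), then for \emph{every} module-finite extension domain $R \subseteq S$, the inclusion splits over $R$. Since $S = \Sym V$ is such an extension (it is a domain, module-finite over $S^G$ by Noether), we get the splitting, and everything else is routine. I would flag that one must check $R$ is indeed strongly $F$-regular rather than merely weakly $F$-regular --- but this is exactly the content of \cite[Corollary~4.3]{Lyubeznik:Smith} in the graded case, already cited.
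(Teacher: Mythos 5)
Your proposal is correct and follows essentially the same route as the paper: $F$-regularity implies that $R=(\Sym V)^G$ splits, as an $R$-module, from the module-finite extension $\Sym V$, and then \cite[Proposition~3.1.4]{Smith:VdB} gives FFRT for the graded direct summand of a polynomial ring. The one quibble is that splitting from every module-finite extension is not the \emph{defining} property of strong $F$-regularity but a theorem of Hochster--Huneke (the splinter property, \cite[Theorem~5.25]{HH:JAG}), which is precisely the reference the paper invokes; your initial detour through pseudoreflections and small actions is unnecessary, as you yourself noted.
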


\begin{proof}
An $F$-regular ring is \emph{splinter} by \cite[Theorem~5.25]{HH:JAG}, i.e., it is a direct summand of each module-finite extension ring. Hence, if $(\Sym V)^G$ is $F$-regular, then it is a direct summand of $\Sym V$. But then it has FFRT by \cite[Proposition~3.1.4]{Smith:VdB}.
\end{proof}

We next present a family of examples where $(\Sym V)^G$ is not $F$-regular or even $F$-pure, but has FFRT:

\begin{example}
Let $p$ be a prime integer, $V\colonequals\FFp^4$, and $G$ the subgroup of $\GL(V)$ generated by the matrices
\[
\begin{bmatrix}
1 & 0 & 1 & 0\\
0 & 1 & 0 & 1\\
0 & 0 & 1 & 0\\
0 & 0 & 0 & 1
\end{bmatrix},
\qquad
\begin{bmatrix}
1 & 0 & 0 & 1\\
0 & 1 & 0 & 0\\
0 & 0 & 1 & 0\\
0 & 0 & 0 & 1
\end{bmatrix},
\qquad
\begin{bmatrix}
1 & 0 & 0 & 0\\
0 & 1 & 1 & 0\\
0 & 0 & 1 & 0\\
0 & 0 & 0 & 1
\end{bmatrix}.
\]
It is readily seen that the matrices commute, and that the group $G$ has order $p^3$. Consider the action of $G$ on the polynomial ring $S\colonequals\Sym V=\FFp[x_1,x_2,x_3,x_4]$, where $x_1,x_2,x_3,x_4$ are viewed as the standard basis vectors in $V$. While $x_1$ and $x_2$ are fixed under the action, the orbits of $x_3$ and~$x_4$ respectively consist of all linear forms
\[
x_3+\alpha x_1+\gamma x_2 \qquad\text{ and }\qquad x_4+\beta x_1+\alpha x_2,
\]
where $\alpha,\beta,\gamma$ are in $\FFp$. Using Moore determinants as in~\cite[Chapter~1.3]{Goss}, the respective orbit products may be expressed as
\[
u\colonequals 
\frac{\det\begin{bmatrix}
x_1 & x_2 & x_3\\[3pt]
x_1^p & x_2^p & x_3^p\\[3pt]
x_1^{p^2} & x_2^{p^2} & x_3^{p^2}
\end{bmatrix}}
{\det\begin{bmatrix}
x_1 & x_2\\[2pt]
x_1^p & x_2^p
\end{bmatrix}}
\qquad\text{ and }\qquad
v\colonequals 
\frac{\det\begin{bmatrix}
x_1 & x_2 & x_4\\[3pt]
x_1^p & x_2^p & x_4^p\\[3pt]
x_1^{p^2} & x_2^{p^2} & x_4^{p^2}
\end{bmatrix}}
{\det\begin{bmatrix}
x_1 & x_2\\[2pt]
x_1^p & x_2^p
\end{bmatrix}}.
\]
In addition to these, it is readily seen that the polynomial $t\colonequals x_1x_4^p-x_1^px_4+x_2x_3^p-x_2^px_3$ is invariant. These provide us with a \emph{candidate} for the invariant ring, namely
\[
C\colonequals\FFp[x_1,x_2,t,u,v].
\]
Note that $S$ is integral over $C$ as $x_3$ and $x_4$ are, respectively, roots of the monic polynomials
\[
\prod_{\alpha,\gamma\,\in\FFp}(T+\alpha x_1+\gamma x_2)-u
\qquad\text{and}\qquad
\prod_{\beta,\alpha\,\in\FFp}(T+\beta x_1+\alpha x_2)-v
\]
that have coefficients in $C$. Using the first of these polynomials, one also sees that
\[
[\Frac(C)(x_3):\Frac(C)]\ \le\ p^2.
\]
Bearing in mind that $t\in C$, one then has $[\Frac(C)(x_3,x_4):\Frac(C)(x_3)]\le p$, and hence
\[
[\Frac(S):\Frac(C)]\ \le\ p^3.
\]
Since $C\subseteq S^G\subseteq S$ and $|G|=p^3$, it follows that $\Frac(C)=\Frac(S^G)$. To prove that $C=S^G$, it suffices to verify that $C$ is normal. Note that $C$ must be a hypersurface; we arrive at its defining equation as follows: One readily verifies the identity
\begin{multline*}
\det\begin{bmatrix}x_1 & x_2\\[2pt] x_1^p & x_2^p\end{bmatrix}
\left(
\det\begin{bmatrix}x_1 & x_4\\[2pt] x_1^p & x_4^p\end{bmatrix} + \det\begin{bmatrix}x_2 & x_3\\[2pt] x_2^p & x_3^p\end{bmatrix}
\right)^p\\
- x_1^p \det\begin{bmatrix} x_1 & x_2 & x_4\\[3pt] x_1^p & x_2^p & x_4^p\\[3pt] x_1^{p^2} & x_2^{p^2} & x_4^{p^2}\end{bmatrix}
- x_2^p \det\begin{bmatrix}x_1 & x_2 & x_3\\[3pt] x_1^p & x_2^p & x_3^p\\[3pt] x_1^{p^2} & x_2^{p^2} & x_3^{p^2}\end{bmatrix}\\
= \ \left(\det\begin{bmatrix}x_1 & x_2\\[2pt] x_1^p & x_2^p\end{bmatrix}\right)^p
\left(
\det\begin{bmatrix}x_1 & x_4\\[2pt] x_1^p & x_4^p\end{bmatrix} + \det\begin{bmatrix}x_2 & x_3\\[2pt] x_2^p & x_3^p\end{bmatrix}
\right),
\end{multline*}
which may be rewritten as
\[
t^p\det\begin{bmatrix}x_1 & x_2\\[2pt] x_1^p & x_2^p\end{bmatrix}\ -\ 
vx_1^p\det\begin{bmatrix}x_1 & x_2\\[2pt] x_1^p & x_2^p\end{bmatrix}\ -\
ux_2^p\det\begin{bmatrix}x_1 & x_2\\[2pt] x_1^p & x_2^p\end{bmatrix}
\ = \
t\left(\det\begin{bmatrix}x_1 & x_2\\[2pt] x_1^p & x_2^p\end{bmatrix}\right)^p.
\]
Dividing by the determinant that occurs on the left, one then has
\begin{equation}
\label{equation:nfp:hypersurface}
t^p-vx_1^p-ux_2^p\ =\ t(x_1x_2^p - x_1^px_2)^{p-1}.
\end{equation}
The Jacobian criterion shows that a hypersurface with~\eqref{equation:nfp:hypersurface} as its defining equation must be normal; it follows that $C$ is indeed a normal hypersurface, with defining equation~\eqref{equation:nfp:hypersurface}, and hence that $C$ is precisely the invariant ring $S^G$. Equation~\eqref{equation:nfp:hypersurface} shows that $S^G$ is not $F$-pure: $t$ is in the Frobenius closure of $(x_1,x_2)S^G$, though it does not belong to this ideal.

It remains to prove that the ring $C=S^G$ has FFRT. For this, note that after a change of variables, one has
\[
S^G\ \cong\ \FFp[x_1,x_2,t,\tilde{u},\tilde{v}]/(t^p-\tilde{v}x_1^p-\tilde{u}x_2^p).
\]
But then $S^G$ has FFRT by \cite[Observation~3.7, Theorem~3.10]{Shibuta1}: Set $A\colonequals\FFp[x_1,x_2,\tilde{u},\ \tilde{v}]$, and note that
\[
A\ \subseteq\ S^G\ \subseteq\ A^{1/p},
\]
where $A$ is a polynomial ring.
\end{example}

\section*{Acknowledgments}

Calculations with the computer algebra system \texttt{Magma}~\cite{Magma} were helpful in obtaining the presentation of the invariant ring in Remark~\ref{remark:presentation}. The authors are also deeply grateful to Professor Kei-ichi Watanabe for valuable discussions, and to the referee for useful suggestions.


\end{document}